\newtheorem{example}[theorem]{Example}
\renewcommand{\geq}{\geqslant}
\renewcommand{\leq}{\leqslant}
\renewcommand{\ge}{\geqslant}
\renewcommand{\le}{\leqslant}
\newcommand{\per}{\mathop{\mathrm{per}}}
\def\half{\frac{1}{2}}
\def\RPA{\textrm{RPA}}
\def\LRPA{\textrm{LRPA}}
\def\eref#1{$(\ref{#1})$}
\def\sref#1{Section~$\ref{#1}$}
\def\lref#1{Lemma~$\ref{#1}$}
\def\tref#1{Theorem~$\ref{#1}$}
\providecommand{\cnit}{\cellcolor[gray]{.8}}
\providecommand{\LL}{\mathcal{D}}
\providecommand{\LR}{\mathcal{E}}
\title{On the Existence of Retransmission Permutation Arrays}
\author{Ian M. Wanless\footnotemark[2]
           \and Xiande Zhang\footnotemark[2]}
\begin{document}

\maketitle
\renewcommand{\thefootnote}{\fnsymbol{footnote}}
\footnotetext[2]{School of Mathematical Sciences,
Monash University,
VIC 3800, Australia
 ({\tt \{ian.wanless, xiande.zhang\}@monash.edu
}). This research was supported by ARC grants DP1093320
and DP120100197.}
\renewcommand{\thefootnote}{\arabic{footnote}}

\begin{abstract}
We investigate retransmission permutation arrays (RPAs) that are
motivated by applications in overlapping channel transmissions. An RPA
is an $n\times n$ array in which each row is a permutation of $\{1,
\ldots , n\}$, and for $1\leq i\leq n$, all $n$ symbols occur in each
$i\times\left\lceil\frac{n}{i}\right\rceil$ rectangle in specified
corners of the array. The array has types 1, 2, 3 and 4 if the stated
property holds in the top left, top right, bottom left and bottom
right corners, respectively. It is called latin if it is a latin
square. We show that for all positive integers $n$,
there exists a type-$1,2,3,4$ $\RPA(n)$ and a type-$1,2$ latin
$\RPA(n)$.

\end{abstract}

\begin{keywords}
retransmission permutation array, latin rectangle, latin square
 
\end{keywords}

\begin{AMS}
05B15
\end{AMS}

\pagestyle{myheadings} \thispagestyle{plain} \markboth{I. WANLESS AND X. ZHANG}{RETRANSMISSION PERMUTATION ARRAYS}

\section{Introduction}\label{s:intro}
Collisions in overlapping OFDM channels are becoming an increasingly
major problem in the deployment of high-speed wireless networks,
especially when the number of orthogonal channels is limited (e.g.,
\cite{ajss,bcmmw,cmmrb,gpbb,msba}). In an OFDM network, each channel
is allocated a set of consecutive subcarriers, and a message is
transmitted by assigning one bit to each subcarrier.  Two channels
overlap when they contain common subcarriers. When two messages are
transmitted in overlapping channels, only the bits assigned to the
subcarriers contained by both channels collide; the bits in other
subcarriers are clean and can be collected. However, the non-colliding
subcarriers do not contain complete message information. The
consequence is that messages will need to be retransmitted in order to
get all bits successfully.

Retransmission permutation arrays were introduced by
Li~et.~al~\cite{lltvy09} in an attempt to resolve such overlapping
channel retransmission problems. The main idea is to use a different
assignment of bits to subcarriers in each retransmission in order that
the number of retransmissions is optimized for overlapping channels
with any number of colliding subcarriers.  Since each assignment can
be viewed as a permutation of the set of bits, the schedule of
assignments can be arranged as a permutation array.  This motivated
Dinitz~et.~al~\cite{dpsw} to study the following new type of
combinatorial structures.

A {\em type $1$ retransmission permutation array of order $n$},
denoted {\em type-$1$ $\RPA(n)$}, is an $n\times n$ array $A$, in which
each cell contains a symbol from the set $[n]=\{1,\ldots,n\}$, such
that the following properties are satisfied:
\begin{itemize}
\item[(i)] every row of $A$ is a permutation of the $n$ symbols,
and
\item[(ii)] for $1\leq i \leq n$, the
  $i\times\left\lceil\frac{n}{i}\right\rceil$ rectangle in the upper
  left hand corner of $A$ contains all $n$ symbols.
\end{itemize}
There are variations of the above definition. If property (ii) is
modified so it instead holds for rectangles in the upper right,
lower left, or lower right hand corner of $A$, then we say that $A$
is, respectively, a {\em type $2$}, {\em type $3$} or {\em type $4$}
$\RPA(n)$.

An array that is simultaneously a type-$1$ $\RPA(n)$ and a type-$2$
$\RPA(n)$ is referred to as a type-$1,2$ $\RPA(n)$. This notation will
be generalized in the obvious way to arrays that satisfy different
combinations of the variations of property (ii).  The different types of
$\RPA$s are closely related, as shown in \cite{dpsw}. The same paper
showed that, for all positive integers $n$, there exists a type-$1,2$
$\RPA(n)$, a type-$1,3$ $\RPA(n)$ and a
type-$1,4$ $\RPA(n)$.  The following is an example of a type-$1,2,3,4$
$\RPA(11)$.

\begin{example}\label{lrpa11}
  A type-$1,2,3,4$ $\RPA(11)$, where the cells in the $i\times
  \left\lceil \frac{11}{i}\right\rceil$ rectangle, $1\leq i \leq 11$,
  in the four corners are shaded.
\begin{align*}
\centering \footnotesize
 \begin{array}{|c|c|c|c|c|c|c|c|c|c|c|}
\hline
\cnit6 &\cnit2& \cnit3&\cnit4&\cnit5&\cnit1&\cnit7&\cnit8&\cnit9&\cnit10&\cnit11\\
\hline
\cnit7&\cnit11&\cnit8&\cnit9&\cnit10&\cnit6&\cnit2&\cnit3&\cnit4&\cnit1&\cnit5\\
\hline
\cnit10&\cnit5&\cnit1&\cnit11&3&4&8&\cnit9&\cnit6&\cnit7&\cnit2\\
\hline
\cnit4&\cnit9&\cnit11&1&7&5&6&2&\cnit10&\cnit3&\cnit8\\
\hline
\cnit3&\cnit8&\cnit5&7&6&10&1&11&\cnit2&\cnit4&\cnit9\\
\hline
\cnit1&\cnit7&10&2&9&8&3&4&5&\cnit11&\cnit6\\
\hline
\cnit9&\cnit4&\cnit6&10&11&2&5&1&\cnit7&\cnit8&\cnit3\\
\hline
\cnit8&\cnit3&\cnit2&6&4&9&11&7&\cnit1&\cnit5&\cnit10\\
\hline
\cnit2&\cnit1&\cnit7&\cnit5&8&3&10&\cnit6&\cnit11&\cnit9&\cnit4\\
\hline
\cnit5&\cnit6&\cnit4&\cnit3&\cnit1&\cnit11&\cnit9&\cnit10&\cnit8&\cnit2&\cnit7\\
\hline
\cnit11&\cnit10&\cnit9&\cnit8&\cnit2&\cnit7&\cnit4&\cnit5&\cnit3&\cnit6&\cnit1\\
\hline
  \end{array}
\end{align*}
\end{example}

An infinite family of type-$1,2,3,4$ $\RPA$s exists by \cite{dpsw}:

\begin{theorem}\label{t:type1234rpa}
For all positive even integers $n$, there exists a type-$1,2,3,4$ $\RPA(n)$.
\end{theorem}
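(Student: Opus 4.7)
The plan is to construct, for each positive even $n$, an $n\times n$ array that satisfies the type-$1$ condition and in addition possesses a pair of involutive symmetries propagating the corner condition to all four corners for free.

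Concretely, I would seek an array $A$ whose entries $A_{i,j}$ satisfy
\begin{align*}
A_{i,j}+A_{n+1-i,j}=n+1 \qquad\text{and}\qquad A_{i,j}+A_{i,n+1-j}=n+1
\end{align*}
for all $i,j\in[n]$. Since $x\mapsto n+1-x$ is a bijection of $[n]$, the first identity shows that the entries of any $i\times k$ bottom-left rectangle are the image under this bijection of the entries of the corresponding $i\times k$ top-left rectangle (after a reversal of rows that does not affect the set of symbols). Hence the top-left rectangle covers $[n]$ if and only if the bottom-left one does, so the type-$1$ property forces type-$3$. The second identity links type-$1$ and type-$2$ via horizontal reflection, and composing the two links type-$1$ and type-$4$. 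A type-$1$ $\RPA(n)$ satisfying both symmetries is therefore automatically a type-$1,2,3,4$ $\RPA(n)$.

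To produce such an $A$, I would define only rows $1,\dots,n/2$, choosing each to be a permutation of $[n]$ with the palindromic-sum property $A_{i,j}+A_{i,n+1-j}=n+1$, and then set $A_{n+1-i,j}:=n+1-A_{i,j}$ for $1\le i\le n/2$. This automatically makes every row a permutation and enforces both symmetries; the leftmost column becomes a permutation as soon as $A_{1,1},\dots,A_{n/2,1}$ picks exactly one element from each pair $\{k,n+1-k\}$. The real task is to arrange the top half so that every top-left $i\times\lceil n/i\rceil$ rectangle covers $[n]$. I would attempt this either by starting from a type-$1,2$ $\RPA(n)$ from \cite{dpsw} (the pairing of rows into complementary pairs needed here is possible precisely because $n$ is even) and modifying it to be vertically symmetric, or by a recursive doubling construction that builds an order-$2n$ array from an order-$n$ one via four suitably shifted blocks.

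The main obstacle will be simultaneous verification of the type-$1$ condition for every $i\in[n]$ while respecting both symmetries, especially in the transitional range $i\approx\sqrt{n}$ where $\lceil n/i\rceil$ changes irregularly and the rectangle is neither short-and-wide nor tall-and-narrow. For $i=1$ and $i=n$ the condition is trivial, and for $i>n/2$ the rectangle has at most two columns and the condition reduces to a pair of covering statements on the left two columns that follow from the transversal property used above. The genuine combinatorial difficulty lies in the intermediate range, where a row-by-row construction must be tuned so that the entries forced into the right half of each row by the column-reflection constraint do not obstruct the covering condition for any intermediate rectangle.
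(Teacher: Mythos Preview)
The paper does not prove this theorem; it is quoted as a known result from \cite{dpsw} (see the sentence immediately preceding the theorem statement, and the reference to ``Theorem~2.7 of \cite{dpsw}'' at the start of \sref{s:type1234}). So there is no in-paper proof to compare against directly. However, the paper's treatment of the odd case (\lref{type1,234}) makes the shape of the \cite{dpsw} argument clear, and it differs from your plan in an important way.

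Your idea of using a symmetry to transport the corner condition from one corner to another is exactly the right one, and it is what \cite{dpsw} does. But \cite{dpsw} (and this paper, for odd $n$) only imposes \emph{one} reflection, not two. Concretely: one takes an already-constructed type-$1,2$ $\RPA(n)$ $A$, keeps its top $n/2$ rows, and defines the bottom half by $b_{i,j}=\sigma(a_{n+1-i,\,n+1-j})$ for a bijection $\sigma$ of $[n]$ that is \emph{chosen after the fact} to make the $n\times1$ and $(n/2)\times2$ rectangles in the lower corners work. The type-$1,2$ property of the top half is inherited verbatim from $A$; the type-$3,4$ property of the bottom half follows because $\sigma$ is a bijection. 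The only nontrivial checks are that the first and last columns of the new array are permutations and that the $(n/2)\times2$ lower rectangles cover $[n]$, and the freedom in $\sigma$ is precisely what makes those checks go through (compare the choice of $\sigma$ in the proof of \lref{type1,234}).

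Your plan instead fixes $\sigma(x)=n+1-x$ for \emph{both} the horizontal and vertical reflections. This buys you nothing and costs you a lot. First, the horizontal symmetry $A_{i,j}+A_{i,n+1-j}=n+1$ is redundant: a type-$1,2$ $\RPA$ already handles both upper corners, so you do not need to rebuild type-$2$ from type-$1$. Second, fixing the vertical map to $x\mapsto n+1-x$ forces $\{A_{1,1},\dots,A_{n/2,1}\}$ to be a transversal of the pairs $\{k,n+1-k\}$, a constraint that the off-the-shelf type-$1,2$ $\RPA$ from \cite{dpsw} need not satisfy. You then have to reconstruct the entire top half subject to both symmetries, and, as you yourself note, you do not know how to carry out the type-$1$ verification in the intermediate range $i\approx\sqrt{n}$ under these constraints. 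That acknowledged gap is the whole difficulty of the problem, and your proposal does not close it.

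In short: the reflection idea is right, but you should reflect only once, start from an existing type-$1,2$ $\RPA$, and let the symbol bijection $\sigma$ be a free parameter determined by the boundary columns, exactly as in \cite{dpsw} and in this paper's \lref{type1,234}.
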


A retransmission permutation array $A$ is called {\em
  latin} if every column of $A$ is a permutation of the $n$ symbols. A
latin retransmission permutation array ($\LRPA$) of order $n$ is in fact a {\em
  latin square}, where each symbol occurs exactly once in each row and
column. Note that the type-$1,2,3,4$ $\RPA(11)$ in Example~\ref{lrpa11}
is actually latin. For small orders
$n\in\{1,2,\ldots,9,10,12,14,16,36\}$, a type-$1,2,3,4$ $\LRPA(n)$ was
constructed in \cite{dpsw}. Finding general constructions for $\LRPA$s
seems to be quite difficult.  However, it was conjectured in \cite{dpsw}
that a type-$1,2,3,4$ $\LRPA(n)$ exists for all positive integers $n$.

The main contribution of this paper is new constructions for $\RPA$s
and $\LRPA$s. In particular, in \sref{s:type1234} we strengthen
\tref{t:type1234rpa} by showing that a type-$1,2,3,4$ $\RPA(n)$ exists
for all positive integers $n$. Further, in \sref{s:ltype12} we prove
that a type-$1,2$ $\LRPA(n)$ exists for all positive integers $n$,
which is the first known infinite family of latin $\RPA$s. 

\section{Type-$1,2,3,4$ $\RPA$s of odd orders}\label{s:type1234}

Theorem~2.7 of \cite{dpsw} gave an effective way to construct a
type-$1,2,3,4$ $\RPA$ of even order from a type-$1,2$ $\RPA$. In this
section, we will use a similar idea to construct type-$1,2,3,4$
$\RPA$s of odd order by imposing some additional structure on the
type-$1,2$ $\RPA$s.
Note that Dinitz et al.~\cite[Figure 3]{dpsw} gave an algorithm to
construct type-$1,2$ $\RPA$s of odd orders.  We adapt their algorithm
to construct the desired type-$1,2$ $\RPA$s with extra conditions. To
make the paper more self-contained, we now define some terminology and
notation used in the algorithm.

For the time being, we assume that $n$ is odd.
An $r\times \left\lceil \frac{n}{r}\right\rceil$ rectangle is called
{\em basic} if it does not contain an $r'\times \left\lceil
  \frac{n}{r'}\right\rceil$ rectangle where $r'<r$ and $\left\lceil
  \frac{n}{r}\right\rceil=\left\lceil \frac{n}{r'}\right\rceil$. When
verifying property (ii) in the definition of $\RPA$s, it suffices to
consider only basic rectangles. A basic rectangle $R$ of size $r\times
\left\lceil \frac{n}{r}\right\rceil$ is called {\em canonical} if
there is a partition $n=a_1+\cdots+a_r$ where $\left\lceil
  \frac{n}{r}\right\rceil= a_1\geq \cdots\geq a_r>0$, such that
every symbol in $[n]$ occurs exactly once in the
union, over $1\leq i\leq r$, of the first $a_i$ cells of row $i$.

\begin{example}\label{lrpa13}
  A type-$1,2,3,4$ $\LRPA(13)$. The
  basic rectangles in the upper left hand corner have dimensions
  $1\times13$, $2\times7$, $3\times 5$, $4\times4$, $5\times3$,
  $7\times2$ and $13\times 1$. The shaded symbols show that
  these basic rectangles are canonical.
\begin{align*}  \centering \footnotesize
\begin{array}{|c|c|c|c|c|c|c|c|c|c|c|c|c|}
\hline
\cnit7& \cnit2& \cnit3& \cnit4& \cnit5& \cnit6& \cnit1& \cnit8& \cnit9& \cnit10&\cnit11&\cnit12&\cnit13\\ \hline
\cnit8& \cnit13&\cnit12&\cnit11&\cnit10&\cnit9& 7& 5& 4& 3& 2& 1& 6\\ \hline
\cnit9& \cnit6& \cnit1& 2& 3& 4& 10&11&12&13&7& 8& 5\\ \hline
\cnit10&\cnit5& 6& 1& 2& 3& 11&7& 13&12&8& 9& 4\\ \hline
\cnit11&\cnit4& 5& 6& 1& 2& 12&13&8& 7& 9& 10&3\\ \hline
\cnit12&\cnit3& 4& 5& 7& 13&9& 1& 6& 8& 10&11&2\\ \hline
\cnit1& 8& 9& 12&6& 10&2& 3& 11&5& 4& 13&7\\ \hline
\cnit2& 11&10&13&8& 7& 5& 4& 1& 9& 6& 3& 12\\ \hline
\cnit3& 10&7& 9& 13&8& 6& 12&2& 1& 5& 4& 11\\ \hline
\cnit4& 9& 13&8& 11&12&3& 2& 5& 6& 1& 7& 10\\ \hline
\cnit5& 1& 8& 7& 12&11&4& 10&3& 2& 13&6& 9\\ \hline
\cnit6& 7& 2& 3& 4& 1& 13&9& 10&11&12&5& 8\\ \hline
\cnit13&12&11&10&9& 5& 8& 6& 7& 4& 3& 2& 1\\ \hline
  \end{array}
\end{align*}
\end{example}

Denote the number of basic rectangles by $b$. For $1\leq k \leq b$,
let the $k$-th basic rectangle be $R_k$ and suppose it has dimensions
$r_k\times c_k$.
For $2\leq k\leq b-2$, a canonical basic rectangle
$R_k$ is called {\em sum-free} if the following two conditions are
satisfied.
\begin{itemize}
 \item[(a)] For any two symbols $x,y$ in the same row of $R_k$, it
   holds that $x+y\neq n+1$.
 \item[(b)] For any two symbols $x,y$ in the last $c_k-c_{k+1}$
   columns of $R_k$, it holds that $x+y\neq n+1$.
\end{itemize}

Following a similar strategy to \cite{dpsw}, we work on
$R_1,R_2,\dots,R_b$ in turn. At the point at which we start working on
$R_k$ for $3\le k\le b-1$, it should be true that $R_{k-1}$ is a
sum-free canonical basic rectangle (although $R_{k-1}$ may
subsequently be changed).  We then copy the entries in $R_{k-1}\setminus
R_k$ into empty cells in $R_k$, in a way that ensures $R_k$ is
canonical and satisfies Condition (a).  Our ability to do this is
guaranteed by \cite[Lemma~4.6]{dpsw}.  Next, we perform {\em simple
  exchange} (SE) {\em operations} to ensure that $R_k$ is sum-free. An
SE operation swaps the contents of two non-empty cells within the same
row. It is clear that SE operations within $R_k$ cannot affect whether
$R_1,\dots,R_k$ satisfy Condition (a).  Details of how we achieve Condition (b)
will be given below. Once the top left hand corner has been settled,
in each row except the first we will ``reflect'' the filled entries to
produce basic rectangles in the top right corner.  By construction,
$R_2,\dots,R_b$ will all satisfy Condition (a), which guarantees that
the reflection does not repeat any symbol within a row, so it is then
trivial to complete the array to a type-1,2 $\RPA$.

As just mentioned, the crucial step is to show that SE operations can
modify a given canonical basic rectangle $R_k$ to make it sum-free.
Partition $R_k$ into two parts $\LL_k$ and $\LR_k$, where $\LL_k$
consists of the first $c_{k+1}$ columns of $R_k$ and $\LR_k$ consists
of the remaining columns of $R_k$. Define a graph $G_k$ over the $n$
non-empty cells of $R_k$, where two cells are adjacent if the symbols
in the two cells sum to $n+1$. Note that $G_k$ consists of $(n-1)/2$
disjoint edges, and no two cells in a row form an edge if $R_k$
satisfies Condition (a). We say an edge of $G_k$ is of {\it type A} if
both endpoints are in $\LR_k$, of {\it type B} if exactly one endpoint
is in $\LR_k$ and of {\it type C} if no endpoint is in $\LR_k$. In
order to satisfy Condition (b), we will remove type A edges from $G_k$
by SE operations.  The description thus far mirrors the algorithm used
in \cite{dpsw}.  The main difference in our approach is that some
additional conditions will be imposed on the $\RPA$, which we now
construct.

\begin{lemma}\label{type1,2}
  Let $n\geq 15$ be an odd integer and $h=(n+1)/2$. There exists a
  type-$1,2$ $\RPA(n)$, $A=(a_{i,j})$, with the following properties:
\begin{itemize}
 \item $a_{1,1}=h$, $a_{h,1}=1$ and $a_{h,n}=h$;
 \item $a_{i,n}\neq 1$ for $1\leq i \leq h$;
 \item $A$ contains a canonical basic rectangle of size
   $h\times 2$ in the upper left and right corners.
\end{itemize}
\end{lemma}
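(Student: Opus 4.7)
The plan is to adapt the algorithm of \cite[Figure~3]{dpsw} for constructing type-$1,2$ $\RPA$s of odd order. That algorithm processes the basic rectangles $R_1, R_2, \ldots, R_b$ in turn --- for odd $n$ these have dimensions $1\times n, 2\times h, \ldots, h\times 2, n\times 1$, so $R_h$ is exactly the upper-left $h\times 2$ of the lemma. At each step $R_k$ is first made canonical via symbol inheritance from $R_{k-1}$ (guaranteed by \cite[Lemma~4.6]{dpsw}) and then made sum-free via SE operations; once the upper-left is complete, each row $i\ge 2$ is ``reflected'' via $s\mapsto n+1-s$ to populate the right-hand filled cells of that row, and the array is then completed to a type-$1,2$ $\RPA$. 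Canonical structure of $R_h$ and of the reflected upper-right $h\times 2$ is automatic, so the third bullet of the lemma is built in; we focus on enforcing the preset cell values and the column-$n$ constraint on symbol $1$.

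We pre-commit to the entries of $R_h$ so that $a_{1,1}=h$ and $a_{h,1}=1$ (consistent because $R_h$ has canonical partition $(2,2,\ldots,2,1)$ with row $h$ a single cell at $(h,1)$), and so that symbol $n$ is placed in column $2$ of $R_h$ rather than column $1$. The remaining $n-3$ symbols are distributed over the other cells of $R_h$ bijectively and subject to Condition~(a). The requirement $a_{h,n}=h$ conflicts with the default reflection (which sends $a_{h,1}=1$ to $a_{h,n}=n+1-1=n$), so we override the reflection for row $h$ by placing $h$ directly at $(h,n)$. A bookkeeping check shows that the upper-right canonical $h\times 2$ then contains all $n$ symbols exactly once, provided the row-$1$ entries $a_{1,n-1}$ and $a_{1,n}$ are chosen to supply the three symbols missing from the reflection; these can be chosen with $a_{1,n}\ne 1$. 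For rows $2\le i\le h-1$, reflection gives $a_{i,n}=n+1-a_{i,1}$, which is $\ne 1$ iff $a_{i,1}\ne n$ --- guaranteed by our placement of $n$ in column $2$ of $R_h$.

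The main obstacle will be verifying that the SE operations needed to enforce Condition~(b) in the intermediate rectangles $R_2,\ldots,R_{h-1}$ can be performed without disturbing the preset cells of $R_h$, and that the symbol inheritance into $R_h$ at step $h$ is compatible with our chosen preset layout. Since SE swaps act within a single row and $n\ge 15$ leaves many free cells outside the preset cells, the conflict graph $G_k$ of \cite{dpsw} can be made type-A-free by SE swaps that fix the preset cells. The verification will follow the template of the argument in \cite{dpsw} with the extra bookkeeping just described, but involves no fundamentally new ideas beyond the care needed to maintain the preset constraints throughout.
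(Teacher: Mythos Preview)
Your overall strategy matches the paper's: run the \cite{dpsw} algorithm while protecting certain cells, then reflect. But there is a gap in how you frame the protected cells, and a cleaner reflection device that you miss.

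The phrase ``pre-commit to the entries of $R_h$'' suggests fixing all $n$ cells of the left $h\times 2$ block in advance. Taken literally this cannot work: the SE operations at stages $3\le k\le b-2$ genuinely move symbols around within columns $1$ and $2$, and freezing those columns would block the sum-free step. What the paper does instead is protect exactly three cells, setting $a_{1,1}=h$, $a_{2,2}=n$, $a_{3,3}=1$ at the outset, and then gives a counting argument (with forbidden set $F=\{(2,2),(3,3)\}$) showing that the edge-chain procedure of \cite[Lemma~4.5]{dpsw} can always be carried out while avoiding $F$. The choice $a_{3,3}=1$ is precisely what guarantees $1\in\mathcal{E}_{b-2}$ at the penultimate stage, so that $a_{h,1}=1$ is achievable; the choice $a_{2,2}=n$ is what keeps $n$ out of column $1$. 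Identifying these specific cells and proving SE operations can steer around them is the actual technical content of the lemma, and your proposal does not supply it beyond an appeal to ``room to spare''.

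Separately, rather than using the plain reflection $s\mapsto n+1-s$, overriding $(h,n)$ by hand, and then patching the upper-right $h\times 2$ via special choices in row $1$ (two missing symbols, incidentally, not three), the paper defines a single modified bijection $\pi$ with $\pi(1)=h$, $\pi(h)=n$, and $\pi(i)=n+1-i$ otherwise, applied uniformly to every non-empty cell in rows $2,\ldots,n$. Because $\pi$ is a bijection on $[n]$, every reflected basic rectangle automatically contains all symbols, $a_{h,n}=\pi(1)=h$ falls out for free, and $a_{i,n}=1$ would force $a_{i,1}=n$, which is excluded by $a_{2,2}=n$. This removes the ad~hoc bookkeeping your override approach requires.
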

\begin{proof}Since $n\geq 15$, we have $c_2-c_3\geq 2$ and
$r_{b-1}-r_{b-2}\geq 2$. For $1\leq k\leq b$, we will write the symbols
from $[n]$ into $R_k$ as described before the lemma.

When $k=1$, fill in $R_1$ of size $1\times n$, from left to right with
symbols $1,2,\ldots,n$. Then interchange the symbols $1$ and $h$.

When $k=2$, fill in the first $h-1$ cells of row $2$ from left to
right with the symbols $h+1,n,h+2,h+3,\ldots,n-1$. It is easy to check that
$R_2$ is a sum-free canonical basic rectangle.

When $k=3$, copy the symbols in $ \LR_2$ into the left most cells of
row $3$. Since $1\in \LR_2$ and $|\LR_2|\geq 3$, we can set
$a_{3,3}=1$. It is obvious that $R_3$ is canonical and satisfies Condition (a).

For $3\leq k \leq b-2$, suppose we have a canonical basic rectangle
$R_k$ that satisfies Condition (a).  Let $a_k$ denote the number of
type A edges in $G_k$. If $a_k=0$ then $R_k$ is sum-free, so copy the
symbols in $\LR_{k}$ into the empty cells of $R_{k+1}$ such that
$R_{k+1}$ is canonical and satisfies Condition (a). This is guaranteed
to be possible by \cite[Lemma~4.6]{dpsw} and the remark following it.
If $a_k>0$, we will show that $a_k$ can be reduced by at least one by
applying SE operations to cells other than $(1,1)$, $(2,2)$ and
$(3,3)$. Therefore a sequence of SE operations will reduce $a_k$ to
$0$ with $a_{1,1}=h$, $a_{2,2}=n$ and $a_{3,3}=1$ fixed.

Now we deal with the case when $a_k>0$. For convenience, let $F$ be
the set of cells $\{(2,2),(3,3)\}$. Let $x_1y_1$ be a type A edge with
$x_1$, $y_1$ in rows $i_1$, $i_2$ respectively. Let $I$ denote the set
of cells in rows $i_1$ and $i_2$. If there is a type C edge $x_2y_2$
where $x_2\in I$ and $\{x_2,y_2\}\cap F=\emptyset$, then
stop. Otherwise, all edges with one endpoint in $(I\cap
\LL_k)\setminus F$ are of type B. However,
\[
|I\cap \LL_k\setminus F|-|I\cap \LR_k\setminus \{x_1,y_1\}|
\geq|I\cap \LL_k|-|I\cap\LR_k|>0.
\]
The last inequality holds by \cite[Lemma~4.2]{dpsw}. Therefore, one of
these type B edges has an endpoint in $\LR_k\setminus I$. Denote the
edge by $x_2y_2$, where $x_2\in (I\cap \LL_k)\setminus F$ and $y_2$ is
in row $i_3$ of $\LR_k\setminus I$. Here $y_2\not\in F$ since
$(2,2)(3,3)$ is an edge of $G_k$. Add row $i_3$ to $I$.  If there is a
type C edge $x_3y_3$ where $x_3\in I$ and $\{x_3,y_3\}\cap
F=\emptyset$, then stop. If not, since
\[
|I\cap \LL_k\setminus(F\cup\{x_2\})|-|I\cap \LR_k\setminus \{x_1,y_1,y_2\}|
\geq|I\cap\LL_k|-|I\cap \LR_k|>0,
\]
there is a type B edge $x_3y_3$, where $\{x_3,y_3\}\cap F=\emptyset$,
$x_3\in I\cap \LL_k$ and $y_3$ is in row $i_4$ of $\LR_k\setminus
I$. Then add row $i_4$ to $I$ and continue this process until we find
a type C edge $x_j y_j$ where $x_j\in I\cap \LL_k$ and
$\{x_j,y_j\}\cap F=\emptyset$, which must happen since $|I|$ is
bounded by $r_k$ and thus cannot grow indefinitely.  Now, imitating
\cite[Lemma~4.5]{dpsw}, we find a sequence $e_1,\dots,e_\ell$ of edges
from among $\{x_1y_1,\ldots,x_jy_j\}$. We start with $e_1=x_jy_j$,
which is of type C.  Then for $1<i<\ell$ the type B edge $e_i$ is
chosen so that its vertex in $\LR_k$ is in the same row as the vertex
of $e_{i-1}$ in $\LL_k\cap I$. Eventually we reach a type B edge
$e_{\ell-1}$ with a vertex in $i_1$ or $i_2$. This determines $\ell$,
and we finish with $e_\ell=x_1y_1$, which is of type A.  Crucially,
this construction allows us to perform SE operations to swap one end
of $e_i$ with an end of $e_{i+1}$ for $1\le i<\ell$. These operations
remove the type A edge $x_1y_1$ (all the affected edges are now of
type B), thereby reducing $a_k$ by one.  Whenever an SE operation is
performed on two cells in row $1$, say $(1,i)$ and $(1,j)$ with
$1<i<j<h$, we also perform an SE operation on cells $(1,n+1-i)$ and
$(1,n+1-j)$.  Thus after a sequence of SE operations, we can change a
canonical basic rectangle $R_k$ to a sum-free rectangle with
$a_{2,2}=n$ and $a_{3,3}=1$ fixed. Note that $a_{1,1}=h$ is fixed
since $(1,1)$ is an isolated vertex in $G_k$, for $3\leq k \leq b-2$.

When $k=b-1$, we can let $a_{h,1}=1$ since $1\in \LR_{b-2}$.  When
$k=b$, copy the symbols in $ \LR_{b-1}$ into the empty cells in $R_b$.

At last, define the reflection map $\pi:[n]\rightarrow[n]$ as follows:
\begin{align}\label{ppi}
\pi(i)=
\begin{cases}
n+1-i&\text{if $i\not \in\{1,h\}$},\\
h&\text{if }i=1,\\
n&\text{if }i=h.\\
\end{cases}
\end{align}
For each non-empty cell $(i,j)$ in rows $2,\ldots,n$ of $A$ define
$a_{i,n+1-j}=\pi(a_{i,j})$. Complete each row of $A$ to a permutation
of the $n$ symbols to get the desired type-$1,2$ $\RPA(n)$. Note that
$a_{2,2}=n$, so that $a_{i,1}\neq n$ and hence
$a_{i,n}\neq 1$ for $1\leq i \leq h$.
\end{proof}

\begin{lemma}\label{type1,234}
For all odd integers $n$, there is a type-$1,2,3,4$ $\RPA(n)$.
\end{lemma}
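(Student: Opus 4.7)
The plan is to use the specially structured type-$1,2$ $\RPA(n)$ supplied by Lemma~\ref{type1,2} to build a type-$1,2,3,4$ $\RPA(n)$ via a symmetry construction, in the spirit of the even-order argument of \cite[Theorem~2.7]{dpsw}. The extra structural hypotheses imposed in Lemma~\ref{type1,2} (the fixed entries, the non-appearance of $1$ in the top of the last column, and the canonical $h\times 2$ rectangles in both upper corners) are precisely what allows the even-order strategy to be adapted to the odd case.

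First, I would dispose of the small odd orders not covered by Lemma~\ref{type1,2}, namely $n\leq 13$, by appealing to the constructions of \cite{dpsw} together with the explicit examples already exhibited (see Example~\ref{lrpa11} and Example~\ref{lrpa13}). For odd $n\geq 15$, let $h=(n+1)/2$ and let $A=(a_{i,j})$ be the type-$1,2$ $\RPA(n)$ produced by Lemma~\ref{type1,2}. Using the permutation $\pi$ defined in \eref{ppi}, I would define a new array $B=(b_{i,j})$ by copying the top $h$ rows from $A$ and filling the remaining $h-1$ rows by a $180^{\circ}$ rotation of the first $h-1$ rows of $A$ composed with the symbol twist $\pi$:
\begin{equation*}
b_{i,j}=\begin{cases} a_{i,j} & \text{if } 1\leq i\leq h,\\ \pi(a_{n+1-i,\,n+1-j}) & \text{if } h<i\leq n.\end{cases}
\end{equation*}
Because $\pi$ is a bijection and each row of $A$ is a permutation of $[n]$, every row of $B$ is again a permutation of $[n]$.

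Type-$1$ and type-$2$ are immediate for the basic rectangles with $r_k\leq h$, since those live entirely within the unchanged top $h$ rows of $B$. For type-$3$ and type-$4$, the $180^{\circ}$-with-twist symmetry matches each basic rectangle in a lower corner of $B$ with a basic rectangle in the diagonally opposite upper corner of $A$; since $\pi$ is a bijection, the set of symbols appearing in any such rectangle is still all of $[n]$.

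The main obstacle is the short list of consistency checks along the horizontal axis of symmetry: row $h$ is shared by the top and bottom halves, so the $h\times 2$ basic rectangles in the lower corners and the $n\times 1$ basic rectangles (complete columns) depend delicately on how the entries of row $h$ interact with the $\pi$-images of entries from rows $1,\ldots,h-1$. The conditions $a_{1,1}=h$, $a_{h,1}=1$, $a_{h,n}=h$, $a_{i,n}\neq 1$ for $1\leq i\leq h$, and the canonical $h\times 2$ rectangles in the two upper corners are engineered precisely to make these boundary constraints hold, so the verification should reduce to case-by-case bookkeeping with $\pi$ acting on those specific cells (using in particular the three-cycle $1\mapsto h\mapsto n\mapsto 1$ within $\pi$ to see that the symbols $1$, $h$, $n$ land exactly where the canonical partitions of the upper corners require them to).
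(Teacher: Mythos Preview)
Your overall architecture matches the paper's: keep the top $h$ rows of the type-$1,2$ array $A$, fill the bottom $h-1$ rows by a $180^\circ$ rotation of the top $h-1$ rows composed with a symbol bijection, and then check the boundary rectangles. The fatal gap is your choice of the bijection. You use the fixed map $\pi$ from \eref{ppi}, but $\pi$ does \emph{not} make the $n\times 1$ basic rectangles (the full first and last columns) work; the paper instead constructs a bespoke bijection $\sigma$ depending on $A$.

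Concretely, in the array $A$ of Lemma~\ref{type1,2} one has $a_{1,n}=n$ and, by the reflection used to build $A$, $a_{i,n}=\pi(a_{i,1})$ for $2\le i\le h-1$. Writing $X=\{a_{2,1},\dots,a_{h-1,1}\}$ and $Z=\{a_{1,n},\dots,a_{h-1,n}\}$, this gives $Z=\{n\}\cup\pi(X)$. Since $X\cap\{1,h,n\}=\emptyset$ (because $a_{1,1}=h$, $a_{h,1}=1$, $a_{2,2}=n$ and the $h\times2$ corner is canonical), $\pi^2$ acts as the identity on $X$, so $\pi(Z)=\{1\}\cup X$. Thus in your $B$ the first column carries $X\cup\{1,h\}$ in rows $1,\dots,h$ and $\pi(Z)=X\cup\{1\}$ in rows $h+1,\dots,n$; the union is only $X\cup\{1,h\}$, of size $h$, not $[n]$. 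The last column fails in the same way. So your $B$ is not even type~$1$ (let alone types $3,4$). The paper avoids exactly this collapse by choosing $\sigma$ with $\sigma(Z)=[n]\setminus(X\cup\{1,h\})$ and $\sigma(X)=[n]\setminus(Z\cup\{1,h\})$, together with $\sigma(1)=h$, $\sigma(h)=1$; the extra hypotheses of Lemma~\ref{type1,2} are there to guarantee such a $\sigma$ exists and that the $h\times2$ lower-corner checks then go through.
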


\begin{proof}
For $n\leq 13$, a type-$1,2,3,4$ $\RPA(n)$ exists by \cite{dpsw} and
Examples \ref{lrpa11} and \ref{lrpa13}. For $n\geq 15$, let
$A=(a_{i,j})$ be the type-1,2 $\RPA(n)$ on $[n]$ obtained in
\lref{type1,2}.
Let $X=\{a_{2,1},\ldots, a_{h-1,1}\}$,
$Y=[n]\setminus (X\cup\{1,h\})$,
$Z=\{a_{1,n},\ldots, a_{h-1,n}\}$ and
$W=[n]\setminus (Z\cup\{1,h\})$. Let $\sigma:[n]\rightarrow[n]$
be any bijection
satisfying $\sigma(X)=W$, $\sigma(Z)=Y$, $\sigma(1)=h$ and
$\sigma(h)=1$ (such a bijection must exist because $|X|=h-2=|W|$,
$|Z|=h-1=|Y|$ and $|X\cap Z|=|X|-|X\cap W|=|W|-|X\cap W|=|W\cap Y|$).
Define an $n\times n$ array $B=(b_{i,j})$ as follows:
\begin{equation*}
b_{i,j}=\begin{cases}
a_{i,j} &\text{if $i\leq h$,}\\
\sigma(a_{n+1-i,n+1-j}) &\text{if $i>h$.}
\end{cases}
\end{equation*}

It can be checked that $B$ is a type-$1,2,3,4$ $\RPA(n)$.  Most of the
verifications are straightforward. The only tricky points are to
ensure that the $n\times 1$ and $h\times 2$ basic rectangles in the
lower left and lower right corners of $B$ contain all $n$ symbols. The
$n\times 1$ basic rectangles in the lower left and lower right corners
of $B$ contain $X\cup \{h,1\}\cup Y$, $Z\cup W \cup \{1,h\}$
respectively, which are the whole set $[n]$. Let $U$ be the $h\times 2$ basic rectangle in the upper
left corner of $A$. The $h\times 2$ basic rectangle in the lower
right corner of $B$, say $V$, contains the symbol $h$ together with
the image of the first $h-1$ rows of $U$ under the bijection
$\sigma$. Because $U$ is canonical, it follows that the first $h-1$
rows of $U$ contain all the symbols in $[n]\setminus
\{1\}$, and hence the last $h-1$ rows of $V$ contain all the
symbols in $[n]\setminus \{h\}$. A similar argument
applies to the $h\times 2$ basic rectangle in the lower left corner of
$B$.
\end{proof}

Combining \lref{type1,234} and \tref{t:type1234rpa}, we get the
following result.

\begin{theorem}\label{type1,234rpa}
For all positive integers $n$, there exists a type-$1,2,3,4$ $\RPA(n)$.
\end{theorem}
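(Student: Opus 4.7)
The plan is straightforward: do a case split on the parity of $n$, since every positive integer is either even or odd, and dispatch each case to a result already established.

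First, for positive even $n$, I would simply cite \tref{t:type1234rpa} from \cite{dpsw}, which already asserts the existence of a type-$1,2,3,4$ $\RPA(n)$ for every even $n$. Nothing new is needed in this case.

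Second, for odd $n$, I would invoke \lref{type1,234}, which covers all odd orders. That lemma was the main engine: starting from the specially structured type-$1,2$ $\RPA(n)$ of \lref{type1,2} (which anchors $a_{1,1}=h$, $a_{h,1}=1$, $a_{h,n}=h$, forces $a_{i,n}\neq 1$ for $1\leq i\leq h$, and places canonical $h\times 2$ basic rectangles in both upper corners), one reflects the top half via a bijection $\sigma$ interchanging $1$ and $h$ and mapping the first column's symbol set $X$ to the complement $W$ of the last column's symbol set, in order to install compatible basic rectangles in the bottom corners. So the genuine combinatorial work is already done.

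Since the even and odd cases exhaust all positive integers, combining them yields the theorem. There is no remaining obstacle at this stage; the hard part — the sum-free canonical rectangle construction and the compatibility of the reflected bottom half — was already handled in \lref{type1,2} and \lref{type1,234}. The final proof is therefore a one-line appeal to those two results.
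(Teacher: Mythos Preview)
Your proposal is correct and matches the paper's own proof exactly: the theorem is stated immediately after the sentence ``Combining \lref{type1,234} and \tref{t:type1234rpa}, we get the following result,'' with no further argument. Your parity split and citations are precisely what the paper does.
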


\section{Type-$1,2$ $\LRPA$s}\label{s:ltype12}

In this section, we prove the existence of a type-$1,2$ $\LRPA(n)$ for
all positive integers $n$.  First we need some terminology for dealing
with latin squares. A {\em partial latin square} of order $n$ is an
$n\times n$ array with each cell filled or empty, such that each
symbol occurs at most once in each row and column.  A partial latin
square is {\em completable} if it can be completed to a latin
square. A row (column) in a partial latin square is {\em completable}
if it can be extended to another partial latin square by filling the
row (column).  A {\em latin rectangle} is a rectangular array in which
each symbol in a given set of symbols occurs exactly once in each row
and at most once in each column. It is convenient to also use the
phrase ``latin rectangle'' to describe a partial latin square in which
some rows are completely filled and the other rows are completely empty.

The following famous theorem is due to Hall~\cite{h45}.
\begin{theorem}\label{hall}
Any $r\times n$ latin rectangle is completable to an $n\times n$ latin square.
\end{theorem}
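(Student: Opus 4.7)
The plan is to prove the theorem by induction on $n-r$, reducing it to the single step of extending an $r\times n$ latin rectangle (with $r<n$) by one additional row to obtain an $(r+1)\times n$ latin rectangle. Iterating this step until $r=n$ produces the desired latin square.

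For the extension step, for each column $j\in[n]$ I would let $S_j\subseteq[n]$ be the set of symbols not yet used in column $j$; then $|S_j|=n-r$, and filling a new row amounts to choosing a system of distinct representatives (SDR) for the family $(S_1,\ldots,S_n)$. I would form the bipartite graph $G$ with column-vertices $\{1,\ldots,n\}$ and symbol-vertices $\{1,\ldots,n\}$, joining column $j$ to symbol $s$ precisely when $s\in S_j$. By construction every column-vertex has degree $n-r$. Moreover, each symbol appears exactly once per row and hence in $r$ cells spread across $r$ distinct columns (symbols do not repeat within a column), so each symbol is missing from exactly $n-r$ columns. Therefore $G$ is $(n-r)$-regular.

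The key observation is that a regular bipartite graph automatically satisfies Hall's condition: for any set $T$ of column-vertices, double-counting the edges incident to $T$ gives $(n-r)|T|\le(n-r)|N(T)|$, whence $|N(T)|\ge|T|$. Applying Hall's marriage theorem then yields a perfect matching in $G$, which is exactly an SDR for $(S_1,\ldots,S_n)$; placing these representatives in row $r+1$ produces the $(r+1)\times n$ latin rectangle required for the induction. There is no substantive obstacle beyond the regularity check for $G$, which is immediate from the row- and column-distinctness properties of a latin rectangle.
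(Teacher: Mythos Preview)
Your proof is correct; the $(n-r)$-regularity of the bipartite availability graph is exactly the point, and extending one row at a time via Hall's marriage theorem is the standard argument. The paper itself does not supply a proof of this theorem---it merely states it as a classical result and cites Hall's 1945 paper---so there is nothing further to compare.
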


Define $\Xi_{k,n,\ell}$ to be the set of $n\times n$ partial latin
squares with the first $k$ rows filled, exactly $\ell$ filled cells in
row $k+1$, and no other filled cells. Then define $f_{k,n}$ to be the
maximum integer $\ell<n$ such that all partial latin squares in
$\Xi_{k,n,\ell}$ are completable.  Note that, by \tref{hall}, partial
latin squares in $\Xi_{k,n,\ell}$ are completable if and only if row
$k+1$ is completable. Informally, $f_{k,n}$ is the number of
``free-choices" that you have when extending a $k\times n$ latin
rectangle by one row. Due to \tref{hall} the value of $f_{k,n}$ is
well-defined, and we have $f_{k,n}\geq 1$ for $0 <k < n-1$ and
$f_{0,n}=f_{n-1,n}=n-1$. It has been shown in \cite{brualdicsima} that 
we often have
many free choices, particularly when extending ``thin'' latin
rectangles:

\begin{theorem}\label{l:fkn}
\[
f_{k,n}=
\begin{cases}
n-2k&1\le k<\half n,\\
1&\half n\le k\le n-2.
\end{cases}
\]
\end{theorem}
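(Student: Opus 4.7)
\tref{l:fkn} gives an exact formula for $f_{k,n}$ that splits at $k = n/2$; in each of the two regimes I must prove a matching lower and upper bound. For the lower bound when $1 \leq k < n/2$, I would reduce via \tref{hall} to completing row $k+1$ of a given $P \in \Xi_{k,n,n-2k}$ and build the bipartite ``choice graph'' $G$ whose two parts are the $2k$ empty cells of row $k+1$ and the $2k$ still-unused symbols, joining column $c$ to symbol $s$ precisely when $s$ is absent from the first $k$ rows of column $c$. Each vertex of $G$ has degree at least $2k - k = k$, so Hall's condition splits into two cases: if $|T| \leq k$ the minimum-degree bound already gives $|N(T)| \geq k \geq |T|$, and if $|T| > k$ then any symbol occupies only $k$ of the columns above and so is missing from at least one column of $T$, forcing $N(T)$ to be everything.

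For the upper bound when $1 \leq k < n/2$, I would exhibit an explicit non-completable $P \in \Xi_{k,n,n-2k+1}$. Take a latin rectangle whose first $k$ columns form a $k \times k$ latin square on symbols $\{1, \ldots, k\}$ and whose last $n-k$ columns form a $k \times (n-k)$ latin rectangle on symbols $\{k+1, \ldots, n\}$ (possible because $k < n - k$), then fill row $k+1$ in the middle columns $k+1, \ldots, n-k+1$ with symbols from $\{k+1, \ldots, n\}$, obtained as the first $n - 2k + 1$ entries of a \tref{hall}-extension of the smaller latin rectangle. Taking $T$ to be the $k$ empty cells in columns $1, \ldots, k$, every symbol in $\{1, \ldots, k\}$ is unused yet appears in every column of $T$, so $|N(T)| \leq (2k - 1) - k = k - 1 < k = |T|$ and Hall fails.

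In the regime $n/2 \leq k \leq n-2$ the bound $f_{k,n} \geq 1$ is already stated in the text, so it remains only to produce a non-completable $P \in \Xi_{k,n,2}$. The cleanest case is $k = n-2$: the extension graph of the cyclic latin rectangle is $2$-regular and forms a single cycle of length $2n$, admitting exactly two alternating perfect matchings $M_1, M_2$ that differ on every column, and pre-filling one cell of row $k+1$ as dictated by $M_1$ together with a different cell as dictated by $M_2$ kills every completion. For other $k$ in this range I would design $L$ so that some three columns have missing sets all contained in a fixed $(n-k+2)$-element set, then pre-fill two carefully chosen symbols so that those three columns are each restricted to a common $2$-element residual set, creating a Hall violation. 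The main obstacle I anticipate is realising this three-column design for every eligible $(k, n)$: the simplest ``$n-k+1$ identical columns'' approach needs $k \geq (n+1)/2$ and so just misses the boundary $k = n/2$ when $n$ is even, where a more delicate asymmetric overlap of missing sets is required.
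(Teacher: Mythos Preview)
The paper does not give its own proof of \tref{l:fkn}; it simply attributes the result to Brualdi and Csima \cite{brualdicsima} and extracts the Frobenius--K\"onig criterion (\lref{l:FrobKon}) for later use. So your proposal is being measured against a citation, not against an argument in the paper.

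Your treatment of the regime $1\le k<\half n$ is sound: the minimum-degree Hall argument for the lower bound is correct, and the block construction (a $k\times k$ latin subsquare on $\{1,\dots,k\}$ in the first $k$ columns, then $n-2k+1$ further entries from an extension of the right-hand block) genuinely produces a non-completable member of $\Xi_{k,n,n-2k+1}$. The bound $f_{k,n}\ge1$ for $k\ge\half n$ is also fine to take as given; it follows immediately from decomposing the $(n-k)$-regular matrix $M(R)$ into permutation matrices.

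The real gap is the upper bound for $\half n\le k\le n-3$. Your cyclic argument at $k=n-2$ is correct, but the ``$n-k+1$ identical columns'' construction you rely on for the rest of the range is not merely restricted to $k\ge(n+1)/2$ --- it is impossible for every $k$. If $n-k+1$ columns of a $k\times n$ latin rectangle all share the same $(n-k)$-element missing set $M$, then each symbol of $M$ occurs $k$ times and is confined to the remaining $k-1$ columns, a contradiction. Your fallback ``three columns with missing sets inside an $(n-k+2)$-set'' is also too weak as stated: after deleting two pre-filled symbols the union of the three residual sets still has size $n-k$, so the Hall deficit $|T|>|N(T)|$ with $|T|=3$ forces $n-k\le2$, which is again only $k=n-2$. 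Thus for the entire interval $\half n\le k\le n-3$ you currently have no working example, and this is precisely where the construction requires care. A cleaner route, in the spirit of \lref{l:FrobKon}, is to build $R$ so that $M(R)$ already contains a fixed $r\times s$ zero block with $r+s=n-1$ (say $r=k$, $s=n-1-k$), and then pre-fill two cells using symbols outside the $r$ block rows and columns outside the $s$ block columns; verifying that such an $R$ and such a valid pair of pre-fills exist for every $k$ in the range is the missing work.
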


The values of $f_{k,n}$ were determined in \cite{brualdicsima} by applying the classical Frobenius-K\"onig Theorem. We restate the main idea of the proof below in Lemma~\ref{l:FrobKon} since it is useful to construct type-$1,2$ $\LRPA$s.
For an $n\times n$ matrix $M = (m_{ij})$, the {\em permanent} of $M$,
$\per M$, is defined by
\begin{align*}
\per M =\sum_{\tau} m_{1\tau(1)}\cdots m_{n\tau(n)},
\end{align*}
where the sum is over all permutations $\tau$ of $[n]$.

Suppose $S\in\Xi_{k,n,\ell}$ and let $R$ be the $k\times n$ latin
rectangle formed by the first $k$ rows of $S$.  Define a
$(0,1)$-matrix $M(R)$ by assigning the $(i,j)$-entry to be $1$ if and only
if symbol $i$ does not appear in column $j$ of $R$. Let $M'(S)$ be the
submatrix of $M(R)$ obtained by deleting the $\ell$ rows
and $\ell$ columns that correspond to, respectively, the symbols and columns
of the filled entries in row $k+1$ of $S$. We have:

\begin{lemma}\label{l:FrobKon}
$S$ is not completable if and only if $M'(S)$ contains an $r\times s$
matrix of zeros where $r+s=n-l+1$.
\end{lemma}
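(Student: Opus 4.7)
The plan is to translate the question of completability of row $k+1$ into a permanent-positivity question about the matrix $M'(S)$ and then invoke the classical Frobenius--K\"onig theorem.

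First I would use \tref{hall} to reduce the problem: since the first $k$ rows of $S$ form a $k\times n$ latin rectangle, $S$ is completable if and only if row $k+1$ is completable, exactly as noted immediately after the definition of $f_{k,n}$. So I only need to decide when the $n-\ell$ empty cells in row $k+1$ can be filled, using each of the $n-\ell$ symbols not yet present in row $k+1$, without creating a column repetition with the already-placed first $k$ rows.

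Next I would encode this as a bipartite matching problem. Index the rows of $M(R)$ by symbols in $[n]$ and the columns by columns of $S$; by definition the $(i,j)$-entry of $M(R)$ is $1$ precisely when placing symbol $i$ in column $j$ of row $k+1$ is legal with respect to $R$. After deleting the $\ell$ symbols already used in row $k+1$ and the $\ell$ columns already filled in row $k+1$, we obtain the $(n-\ell)\times(n-\ell)$ matrix $M'(S)$. A completion of row $k+1$ corresponds exactly to a choice of $(n-\ell)$ ones of $M'(S)$, one in each row and one in each column, i.e.\ a permutation $\tau$ with $M'(S)_{i,\tau(i)}=1$ for every $i$. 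Such a permutation exists if and only if $\per M'(S)>0$.

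Finally I would apply the Frobenius--K\"onig theorem: for an $m\times m$ $(0,1)$-matrix $N$, $\per N=0$ if and only if $N$ contains an $r\times s$ all-zero submatrix with $r+s=m+1$. Taking $N=M'(S)$ and $m=n-\ell$ gives the stated condition $r+s=n-\ell+1$. Combining this with the previous two steps yields the lemma. The argument is essentially a repackaging of a classical result, so there is no real obstacle; the only thing to be careful about is the bookkeeping of sizes (that $M'(S)$ is $(n-\ell)\times(n-\ell)$ after the deletions) and noting that the $\ell$ deleted rows and columns of $M(R)$ are indeed in bijective correspondence via the filled cells of row $k+1$, which is immediate from the definition of $\Xi_{k,n,\ell}$.
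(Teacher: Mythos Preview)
Your argument is correct and follows essentially the same route as the paper: reduce completability of $S$ to completability of row $k+1$, identify completions of that row with nonzero diagonals of $M'(S)$ so that $S$ is not completable iff $\per M'(S)=0$, and then apply the Frobenius--K\"onig theorem to the $(n-\ell)\times(n-\ell)$ matrix $M'(S)$. The only difference is cosmetic---you spell out the Hall reduction and the bipartite-matching interpretation a bit more explicitly than the paper does.
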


\begin{proof}
It is well known that the number of extensions of $R$ to a
$(k+1)\times n$ latin rectangle is $\per M(R)$.  By the same logic,
the number of extensions of $S$ to a $(k+1)\times n$ latin rectangle
is the permanent of the $(n-l)\times(n-l)$ submatrix $M'(S)$.
In particular, $S$ is not completable if and only if $\per M'(S)=0$.
By the Frobenius-K\"onig Theorem, $\per M'(S)=0$
if and only if $M'(S)$ contains an $r\times s$ zero submatrix
such that $r+s=n-l+1$.
\end{proof}

We will apply Theorem~\ref{l:fkn} and Lemma~\ref{l:FrobKon}
repeatedly in our construction of $\LRPA$s. For each integer $n$, let
$A$ be a type-$1,2$ $\RPA(n)$ constructed in \cite{dpsw}. Denote by $P$
the partial matrix obtained from $A$ by selecting the cells in the
first $b-1$ canonical basic rectangles in the upper left corner, and
the corresponding cells in the upper right corner of $A$.  Note that
no symbol repeats in a column of $P$ because the basic rectangles are
canonical. Since the rows of $A$ are permutations, $P$ is a partial
latin square. Now we will complete $P$ to a type-$1,2$ $\LRPA(n)$.

\begin{theorem}\label{type1,2lpra}
For all positive integers $n$, there exists a type-$1,2$ $\LRPA(n)$.
\end{theorem}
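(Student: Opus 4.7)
The plan is to complete $P$ row by row. Any completion of $P$ to a latin square is automatically a type-$1,2$ $\LRPA(n)$: the first $b-1$ basic rectangles in each upper corner are canonical (and hence contain every symbol) by construction of $P$, while the final basic rectangle in each corner is $n\times 1$ and is filled with a permutation by the latin property. Since the pre-filled rows of $P$ are precisely rows $1,\ldots,r_{b-1}$, and $r_{b-1}$ is the smallest $i$ with $\lceil n/i\rceil=2$, we have $r_{b-1}=\lceil n/2\rceil$. It therefore suffices to extend $P$ to a $\lceil n/2\rceil\times n$ latin rectangle; \tref{hall} will then complete the square.

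I would fill rows $1,\ldots,\lceil n/2\rceil$ in order. Suppose rows $1,\ldots,k$ are already complete and row $k+1$ has $\ell$ pre-filled entries, where $\ell\leq 2c_{t(k+1)}\leq 2\lceil n/(k+1)\rceil$. In the \emph{thin regime} $\ell\leq f_{k,n}=n-2k$, \tref{l:fkn} immediately supplies a valid completion regardless of earlier choices; a short calculation shows this handles all $k$ except a bounded number near $\lceil n/2\rceil-1$. In the \emph{dense regime} $\ell>f_{k,n}$, I would apply \lref{l:FrobKon}: completability fails only if $M'(S)$ contains an $r\times s$ zero submatrix with $r+s=n-\ell+1$. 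Such a submatrix would encode $r$ symbols each appearing in every one of $s$ specified columns of rows $1,\ldots,k$, which immediately forces $r,s\leq k$ and hence $\ell\geq n+1-2k$. Since the pre-filled columns of row $k+1$ lie only in the leftmost and rightmost few columns, the candidate $s$ columns must sit in the central portion of the array, and the canonical symbol pattern inherited from the basic rectangles of $P$ can be leveraged to preclude the existence of such submatrices, provided earlier rows were completed with care.

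The principal obstacle will be the dense regime: the column-symbol distributions built up during the thin-regime rows must remain balanced enough that no $r$ symbols share a common block of $s$ central columns at the critical rows near $\lceil n/2\rceil$. If purely greedy completion is insufficient, I would maintain an explicit invariant on the column sets $S_j$ of symbols already placed in column $j$, updated at each row so as to forestall large common intersections $\bigcap_{j\in C}S_j$ for the relevant column blocks $C$. A short list of small $n$ can be dispatched separately using explicit constructions from \cite{dpsw} together with Examples~\ref{lrpa11} and~\ref{lrpa13}.
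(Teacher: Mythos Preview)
Your overall strategy coincides with the paper's: start from the partial latin square $P$, complete rows $1,\ldots,\lceil n/2\rceil$ one at a time using \tref{l:fkn} in the thin regime and \lref{l:FrobKon} in the dense regime, then finish with \tref{hall}. The thin-regime analysis and the reduction to a handful of values of $k$ near $\half n$ are fine.

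The gap is in your dense regime. You correctly observe that a failure at row $k+1$ forces an $r\times s$ zero submatrix in $M'(S)$ with $r+s=n-\ell+1$ and $r,s\le k$, but you then retreat to a vague plan of ``leveraging the canonical pattern'' and, failing that, maintaining invariants on the column sets $S_j$ during earlier completions. This second route is both unnecessary and unlikely to be easy to carry out. The paper never needs any care in how rows are completed; the structural data already present in $P$ suffices. Concretely, only the cases $k\in\{\half(n-3),\half(n-2),\half(n-1)\}$ survive the thin-regime bound. For $k=\half(n-2)$ (so $n$ even, $\ell=4$), the canonical $\tfrac n2\times2$ rectangles in both upper corners guarantee that every symbol other than the four pre-filled ones already appears exactly once in columns $\{1,2\}$ and once in $\{n-1,n\}$ of $R$; hence every row of $M'$ has only $k-2$ zeros, so $r+s\le 2k-2<n-3$. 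For $k\in\{\half(n-3),\half(n-1)\}$ one has $n-\ell+1=2k$, so $r=s=k$ is forced; this means $R$ contains a $k\times k$ latin subsquare $C$. By pigeonhole two of the first three columns lie on the same side of $C$, so those two columns together use at most $\max(k,n-k)=n-k$ symbols, whence the $\lceil n/3\rceil\times3$ top-left rectangle holds at most $n-k+\lceil n/3\rceil<n$ symbols for $n\ge14$, contradicting the fact that this basic rectangle of $P$ already contains all of $[n]$.

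In short, you identified the right obstruction but missed the specific lever: the failure forces a large latin subsquare, which is incompatible with the $\lceil n/3\rceil\times 3$ canonical rectangle already sitting inside $P$. No bookkeeping on earlier completions is needed.
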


\begin{proof}
For $n<14$, a type-$1,2$ $\LRPA(n)$ exists by \cite{dpsw} and
Examples \ref{lrpa11} and \ref{lrpa13}, so we may assume $n\ge14$.

The first two rows of $P$ are complete, except for the empty cell
$(2,h)$ when $n$ is odd. If $n$ is odd, suppose the cell $(1,h)$
is filled with symbol $z$.  Then fill $(2,h)$ with $\pi(z)$, where
$\pi$ is defined in \eref{ppi}. It is easy to check that row $2$ is
also a permutation.

Now we fill in $P$ one row at a time.
Suppose that we have completed
$k$ rows, where $2 \leq k < n/2$. Then $P$ contains at most
$2\left\lceil n/(k+1)\right\rceil$ filled cells in row $k+1$.
So, using Theorem~\ref{l:fkn}, we can fill row $k+1$ provided
\begin{equation}\label{e:keycond}
n-2k\ge2\left\lceil\frac{n}{k+1}\right\rceil.
\end{equation}
Checking directly, this condition holds when $k\in\{2,3\}$,
given that $n\ge14$. So we may assume $k\ge4$.

Suppose that $k\le(n-4)/3$. If \eref{e:keycond} fails,
then
\[
n-2k<2\left\lceil\frac{n}{k+1}\right\rceil
\le\frac{2(n+k)}{k+1}.
\]
Rearranging, we find that
$2k(k+2)>n(k-1)\ge(3k+4)(k-1)$, which contradicts $k\ge4$.

So we may assume that $k\ge n/3-1$. In this case,
$2\left\lceil n/(k+1)\right\rceil\le6$, so \eref{e:keycond} holds
for $k\le\half(n-6)$.

Since $n\ge14$, there are exactly four filled cells
in row $k+1$ of $P$ if $\half(n-6)<k<\half(n-1)$, and exactly
two filled cells if $k=\half(n-1)$.
For $k\in\big\{\half(n-5),\half(n-4)\big\}$ this is sufficient, since
$n-2k\ge4$.

If $k=\half(n-2)$, then let $R$ be the $k\times n$ latin rectangle
contained in $P$.  In this case, $M(R)$ has exactly $k$ zeros in each row and
column. Denote the four distinct symbols in row $k+1$ of $P$ by
$a,b,c,d$. Let $M'$ be the submatrix obtained from $M(R)$ by deleting
rows $\{a,b,c,d\}$ and columns $\{1,2,n-1,n\}$. Since each basic
rectangle of size $\frac{n}{2}\times 2$ in the upper left and upper
right corners covers all symbols exactly once, each row of $M'$ has
exactly $k-2$ zeros. Thus any $r\times s$ zero submatrix of $M'$ has
$r\leq k$ and $s\leq k-2$, meaning that $r+s\leq k+k-2= n-4<n-3$. By
\lref{l:FrobKon}, row $n/2$ is completable.

It remains to consider the possibility that
$k\in\big\{\half(n-3),\half(n-1)\big\}$ and we cannot complete
row $k+1$. By \lref{l:FrobKon}, $M(R)$ contains a $r\times s$ zero
submatrix with $r+s=n-l+1=2k$. As $M(R)$ has only $k$ zeros in each
row and column, we must have $r=s=k$. This means that $R$ contains a
$k\times k$ latin subsquare, say $C$.  Consider the first three
columns of $R$. Either two of them are in $C$, or else two of them are
outside of $C$. In either case there are two columns that between them
contain at most $n-k$ distinct symbols. Hence the $\left\lceil
  n/3\right\rceil\times3$ rectangle in the top left corner of $R$
contains at most $n-k+\left\lceil n/3\right\rceil\le
n-\half(n-3)+\left\lceil n/3\right\rceil<n$ distinct symbols, using
$n\ge14$. This contradicts the choice of $P$.

We conclude that we can complete the first
$\left\lceil n/2\right\rceil$ rows of $P$.
Finally, we complete $P$ to a latin square
by \tref{hall} to get a type-$1,2$ $\LRPA$.
\end{proof}

\section{Concluding remarks}\label{S:concl}

The study of retransmission permutation arrays was motivated by a
technique used to resolve overlapping channel transmissions.  We
completed the existence spectrum for type-$1,2,3,4$ $RPA$s and
constructed a type-$1,2$ $\LRPA(n)$ for all positive integers $n$,
which is the first known infinite family of latin $RPA$s.  By rotating
these examples, it is obvious that we obtain type-$2,4$ $\LRPA(n)$,
type-$3,4$ $\LRPA(n)$ and type-$1,3$ $\LRPA(n)$ for all positive
integers $n$. However, the existence of type-$1,2,3,4$ $\LRPA(n)$ for
all $n$, conjectured in \cite{dpsw}, remains open.


\begin{thebibliography}{10}

  
\bibitem{ajss}
{\sc A. Akella, G. Judd, S. Seshan, and P. Steenkiste},
{\em  Self-management in
chaotic wireless deployments},  In Proceedings of MobiCom, (2005).

\bibitem{bcmmw}
{\sc P. Bahl, R. Chandra, T. Moscibroda, R. Murty, and M. Welsh},
{\em   White
space networking with Wi-Fi like connectivity},
 In Proceedings of SIGCOMM, (2009).
 
\bibitem{brualdicsima}
{\sc R.A. Brualdi and J. Csima}, 
{\em Extending subpermutation matrices in regular classes of matrices},
Discrete Math., 62 (1986), pp.~99--101. 

\bibitem{cmmrb}
{\sc R. Chandra, R. Mahajan, T. Moscibroda, R. Raghavendra, and
P. Bahl},
{\em    A case for adapting channel width in wireless networks},
SIGCOMM Comput. Commun. Rev.,  38 (2008), pp.~135--146.

\bibitem{dpsw}
{\sc J.H. Dinitz, M.B. Paterson, D.R. Stinson, and R. Wei},
{\em  Constructions for retransmission permutation arrays},
 Des. Codes Cryptogr., (2012), DOI: 10.1007/s10623-012-9684-4.

\bibitem{gpbb}
{\sc R. Gummadi, R. Patra, H. Balakrishnan, and E. Brewer}, {\em  Interference
avoidance and control},   In Proceedings of Hotnets-VII, (2008).

\bibitem{h45}
{\sc M. Hall},
{\em   An existence theorem for Latin squares},
 Bull. Amer. Math. Soc.,  51 (1945), pp.~387--388.


\bibitem{lltvy09}
{\sc L.E. Li, J. Liu, K. Tan, H. Viswanathan, and Y.R. Yang},
{\em    Retransmission $\neq$ repeat: simple retransmission permutation can resolve overlapping channel collisions},
 Eighth ACM Workshop on Hot Topics in Networks (HotNets-VIII), (2009).

\bibitem{msba}
{\sc A. Mishra, V. Shrivastava, S. Banerjee, and W. Arbaugh},
{\em   Partially
overlapped channels not considered harmful}, SIGMETRICS Perform.
Eval. Rev.,  34 (2006), pp.~63--74.

\end{thebibliography}
\end{document}